\newtheorem{theorem}{Theorem}
\newtheorem{lemma}{Lemma}	
\newtheorem{remark}{Remark}
\newtheorem{example}{Example}
\newtheorem{definition}{Definition}	
\newtheorem{problem}{Problem}
\def\bx{\underline{\mathbf{x}}}
\def\bx{\mathbf{x}}
\def\by{\mathbf{y}}
\def\dom{\mathrm{dom}}
\title{\LARGE \bf Design of First-Order Optimization Algorithms via \\ Sum-of-Squares Programming}
\author{Mahyar Fazlyab, Manfred Morari, Victor M. Preciado
\thanks{Work supported by the NSF under grants CAREER-ECCS-1651433 and IIS-1447470. The authors are with the Department of Electrical and Systems Engineering, University of Pennsylvania. The authors are with the Department of Electrical and Systems Engineering, University of Pennsylvania. Email: \{mahyarfa,morari,preciado\}@seas.upenn.edu.}}
\begin{document}

\maketitle
\thispagestyle{empty}
\pagestyle{empty}

\begin{abstract}
	In this paper, we propose a framework based on sum-of-squares programming to design iterative first-order optimization algorithms for smooth and strongly convex problems. Our starting point is to develop a polynomial matrix inequality as a sufficient condition for exponential convergence of the algorithm. The entries of this matrix are polynomial functions of the unknown parameters (exponential decay rate, stepsize, momentum coefficient, etc.). We then formulate a polynomial optimization, in which the objective is to optimize the exponential decay rate over the parameters of the algorithm. Finally, we use sum-of-squares programming as a tractable relaxation of the proposed polynomial optimization problem. We illustrate the utility of the proposed framework by designing a first-order algorithm that shares the same structure as Nesterov's accelerated gradient method.
\end{abstract}

\section{Introduction}

Many applications in science and engineering involve solving convex optimization problems using iterative methods.
In the development
of iterative optimization algorithms, there are several objectives
that algorithm designers should consider \cite{wright1999numerical}: 
\begin{itemize}[leftmargin=*]
	\item Robust performance guarantees: Algorithms should perform well for
	a wide range of optimization problems in their class. 
	\item Time and space efficiency: Algorithms should be efficient in terms
	of both time and storage, although these two may conflict. 
	\item Accuracy: Algorithms should be able to provide arbitrarily accurate
	solutions to the problem at a reasonable computational cost. 
\end{itemize}
In general, these goals may conflict. For example, a rapidly convergent
method (e.g., Newton's method) may require too much computer storage
and/or computation. In contrast, a robust method, resilient to noise
and uncertainties, may also be too slow in finding an optimal solution.
Trade-offs between, for example, convergence rate and storage requirements,
or between robustness and speed, are central issues in numerical optimization \cite{wright1999numerical}.

In recent years, there has been an increasing interest in using tools from control theory to study the convergence properties of iterative optimization algorithms. The connection between control and optimization is made by interpreting these iterative algorithms as discrete-time dynamical systems with feedback. This interpretation provides many insights and new directions of research. In particular, we can use control tools to study disturbance rejection properties of optimization algorithms \cite{wang2009distributed,wang2011control,wang2010controlapproach}, study robustness to parameter and model uncertainty \cite{lessard2016analysis}, and analyze tracking and adaptation capabilities \cite{fazlyab2017prediction}. This interpretation also opens the door to the use of control tools for algorithm design \cite{lessard2016analysis,cyrus2017robust,fazlyab2017analysis}.


The main aim of this paper is to develop an optimization framework, based on tools from robust control theory and polynomial optimization, to design first-order optimization algorithms for the class of smooth and strongly convex problems, in which the convergence rate is exponential ($\mathcal{O}(\rho^{k})$ for some $0\leq {\rho}<1$). To this end, we start with a result in \cite{fazlyab2017analysis}, in which the authors derive a matrix inequality as a sufficient condition for exponential stability of the algorithm. The entries of this matrix are polynomial functions of \emph{(i)} the unknown parameters of the algorithm (e.g., stepsize, momentum coefficient) and \emph{(ii)} the unknown exponential decay rate. We then formulate a polynomial optimization problem in which the cost function is the exponential decay rate $\rho$, the constraint is the polynomial matrix inequality described above, and the decision variables are the tunable parameters of the algorithm.
%
%
Finally, we use sum-of-squares programming as a tractable relaxation of the polynomial optimization problem to tune the parameters of the algorithm. We illustrate our approach by designing a first-order method sharing the same structure as Nesterov's accelerated method. To the best of our knowledge, this is the first work on principled and computationally efficient numerical algorithm design.

The rest of the paper is organized as follows. In Section \ref{section: Algorithm Analysis}, we use a Lyapunov analysis framework, in which we cast the problem of finding the worst-case exponential
rate bound of a first-order optimization algorithm as a semidefinite program. In Section \ref{section: Algorithm Synthesis}, we turn to algorithm synthesis and use the results of Section \ref{section: Algorithm Analysis}
to formulate the algorithm design problem as a polynomial optimization and use sum-of-squares machinery to solve the algorithm design problem. Finally, we illustrate the performance of our design framework via numerical simulations.

\section{Algorithm Analysis} \label{section: Algorithm Analysis}

\subsection{Preliminaries}

We denote the set of real numbers by $\mathbb{R}$, the set of real $n$-dimensional vectors by $\mathbb{R}^n$, the set of $m\times n$-dimensional matrices by $\mathbb{R}^{m\times n}$, and the $n$-dimensional identity matrix by $I_n$. We denote by $\mathbb{S}^{n}$, $\mathbb{S}_{+}^n$, and $\mathbb{S}_{++}^n$ the sets of $n$-by-$n$ symmetric, positive semidefinite, and positive definite matrices, respectively. We denote by $\mbox{vec}(\cdot)$ a linear transformation which converts the matrix into a column vector. For a function $f \colon \mathbb{R}^d \to \mathbb{R}$, we denote by $\dom \, f=\{x \in \mathbb{R}^n \colon f(x)<\infty\}$ the effective domain of $f$. The $p$-norm ($p \geq 1$) is displayed by $\|\cdot\|_p \colon \mathbb{R}^d \to \mathbb{R}_{+}$. For two matrices $A \in \mathbb{R}^{m\times n}$ and $B\in \mathbb{R}^{p\times q}$ of arbitrary dimensions, we denote their Kronecker product by $A \otimes B$. We define $\mathbb{R}\left[\mathbf{x}\right]$ as the polynomial ring in $n$ variables and $\mathbb{R\left[\mathbf{x}\right]}_{n,d}$ as the polynomial in $n$ variables of degree at most $d$.

\begin{definition}[Smoothness] \normalfont
	A differentiable function $f \colon \mathbb{R}^d \to \mathbb{R}$ is $L_f$-smooth on $\mathcal{S} \subseteq \dom \, f$ if the following inequality
	\begin{subequations}
		\begin{align} \label{eq: Lipschitz continuity}
		\|\nabla f(x)-\nabla f(y)\|_2 \leq L_f \|x-y\|_2,
		\end{align}
		holds for some $L_f>0$ and all $x,y \in \mathcal{S}$. \eqref{eq: Lipschitz continuity} implies
		\begin{align} \label{eq: Lipschitz continuity function values}
		f(y) \leq f(x) + \nabla f(x)^\top (y-x) + \dfrac{L_f}{2} \|y-x\|_2^2,
		\end{align}
	\end{subequations}
	for all $x,y \in \mathcal{S}$.
\end{definition}

\begin{definition}[Strong Convexity] \normalfont
	A differentiable function $f \colon \mathbb{R}^d \to \mathbb{R}$ is $m_f$-strongly convex on $\mathcal{S} \subseteq \dom \, f$ if the following inequality
	\begin{subequations}
		\begin{align} \label{eq: strong convexity}
		m_f \|x-y\|_2^2 \leq (x-y)^\top (\nabla f(x)-\nabla f(y)),
		\end{align}
		holds for some $m_f>0$ and all $x,y \in \mathcal{S}$. An equivalent definition is that
		\begin{align} \label{eq: strong convexity function values}
		f(x)+\nabla f(x)^\top (y-x) + \dfrac{m_f}{2} \|y-x\|_2^2  \leq f(y),
		\end{align}
	\end{subequations}
	for all $x,y \in \mathcal{S}$.
\end{definition}
We denote the class of functions satisfying \eqref{eq: Lipschitz continuity} and \eqref{eq: strong convexity} by $\mathcal{F}(m_f,L_f)$. A differentiable function $f$ belongs to the class $\mathcal{F}(m_f,L_f)$ on $\mathcal{S}$ if and only if the inequality
\begin{subequations}
	\begin{align}\label{eq: strongly convex IQC}
	\begin{bmatrix}
	x\!-\!y \\ \nabla f(x)\!-\!\nabla f(y)
	\end{bmatrix}^\top \! Q_f \! \begin{bmatrix}
	x\!-\!y \\ \nabla f(x)\!-\!\nabla f(y)
	\end{bmatrix} \geq 0,
	\end{align}
	holds for all $x,y \in \mathcal{S}$ \cite{nesterov2013introductory,lessard2016analysis}, where the indefinite matrix $Q_f \in \mathbb{S}^{2d}$ is given by
	\begin{align} \label{eq: quadratic matrix}
	Q_f = \begin{bmatrix}
	-2m_fL_f & m_f+L_f \\ m_f+L_f & -2
	\end{bmatrix} \otimes I_d.
	\end{align}
\end{subequations}
The condition number of $f \in \mathcal{F}(m_f,L_f)$ is denoted by $\kappa_f = L_f/m_f$.

\subsection{Algorithm Representation}
Consider the following optimization problem
\begin{align} \label{eq: unconstrained problem}
\mathrm{minimize}_{x \in \mathbb{R}^d} \ f(x),
\end{align}
where $f \in \mathcal{F}(m_f,L_f)$ is smooth and strongly convex. Under this assumption, the well-known optimality condition of a point $y_{\star}$ is that
\begin{align*}
\nabla f(y_{\star}) =0.
\end{align*}
Note that $y_{\star}$ is unique as $f$ is strongly convex. Consider a first-order algorithm that generates a sequence of points $\{y_k\}$ that solves \eqref{eq: unconstrained problem}. In other words, we have $\lim\limits_{k \to \infty} f(y_k)=f(y_{\star})$, where $\nabla f(y_{\star})=0$. We can represent the algorithm in the following state-space form \cite{lessard2016analysis,fazlyab2017analysis}:
\begin{align} \label{eq: algorithm canonical form}
\xi_{k+1} &= A(\theta) \xi_k + B(\theta) \nabla f(y_k) \\ \nonumber
y_k &=C(\theta)\xi_k \\ \nonumber
x_k &=E(\theta)\xi_k, \nonumber 
\end{align}
where $\xi_{k} \in \mathbb{R}^n$ ($n \geq d$) is the state of the algorithm and $y_k \in \mathbb{R}^d$ is the output at which the gradient is evaluated. Furthermore, we assume that $x_k$ is another output whose fixed point is optimal, i.e., $\lim\limits_{k \to \infty} f(x_k)=f(x_{\star})$ where $x_{\star}=y_{\star}$. Therefore, the fixed points of the algorithm must naturally satisfy
\begin{align} \label{eq: algorithm fixed point}
\xi_{\star} &= A(\theta) \xi_{\star}, \quad y_{\star} = C(\theta) \xi_{\star}, \quad x_{\star} = E(\theta) \xi_{\star}, \quad x_{\star}=y_{\star}.
\end{align}
In particular, one of the eigenvalues of $A(\theta)$ is equal to one. Note that the matrices $A(\theta) \in \mathbb{R}^{n \times n}$, $B(\theta) \in \mathbb{R}^{n \times d}$, $C(\theta) \in \mathbb{R}^{d \times n}$, and $E(\theta) \in \mathbb{R}^{d \times n}$ in \eqref{eq: algorithm canonical form} are parameterized by the vector $\theta \in \mathbb{R}^p$, which is the concatenation of the parameters of the algorithm (e.g., stepsize, momentum coefficient, etc.). We give two examples below.
\begin{example} \label{example: gradient method}
	\normalfont The gradient method is given by the recursion
	\begin{align}
	\xi_{k+1} = \xi_{k} - h \nabla f(\xi_{k}), \ y_k = \xi_k, \ x_k = \xi_k,
	\end{align}
	where $h \geq 0$ is the stepsize. For this algorithm, $\theta = h$ is the only parameter, and the matrices $A(\theta),B(\theta),C(\theta)$ and $E(\theta)$ are given by
	\begin{align}
	A(\theta) = I_d, \quad B(\theta) = -h I_d, \quad C(\theta) = E(\theta) = I_d.
	\end{align}
\end{example}

\begin{example} \label{example: nesterov heavy-ball method}
	\normalfont Consider the following recursion defined on the two sequences $\{x_k\} $ and $\{y_k\} $,
	\begin{align} \label{eq: universal example}
	x_{k+1} &= x_k+\beta(x_k-x_{k-1})-h \nabla f(y_k), \\
	y_k &= x_k+\gamma(x_k-x_{k-1}), \nonumber 
	\end{align}
	where $h, \beta$ and $\gamma$ are nonnegative scalars. By defining the state vector $\xi_k = [x_{k-1}^\top \ x_k^\top]^\top \in \mathbb{R}^{2d}$ and the parameter vector $\theta = [h \ \beta \ \gamma]^\top$, we can represent \eqref{eq: universal example} in the canonical form \eqref{eq: algorithm canonical form}, as follows,
	\begin{align} \label{eq: accelerated proximal gradient method 11}
	\xi_{k+1} &= \begin{bmatrix} 0 & I_d \\ -\beta I_d & (\beta+1)I_d \end{bmatrix}\xi_k  + \begin{bmatrix} 0 \\ -h I_d\end{bmatrix} \nabla f(y_k) \\ 
	y_k &= \begin{bmatrix} -\gamma I_d & (\gamma+1)I_d\end{bmatrix} \xi_{k}. \nonumber
	\end{align}
	Therefore, the matrices $A(\theta),B(\theta),C(\theta)$ and $E(\theta)$ are given by
	\begin{align} \label{eq: nesterov heavy-ball matrices}
	\left[
	\begin{array}{c|c}
	A(\theta) & B(\theta) \\
	\hline
	C(\theta) & 0
	\end{array}
	\right] &= \left[
	\begin{array}{c|c}
	\begin{matrix}
	0 &I_d \\ -\beta I_d & (\beta+1)I_d
	\end{matrix} & \begin{matrix} 0 \\ -hI_d\end{matrix} \\
	\hline
	\begin{matrix} -\gamma I_d & (\gamma+1)I_d\end{matrix} & 0
	\end{array}
	\right] \\ \nonumber
	E(\theta) &= \begin{bmatrix}
	0 & I_d
	\end{bmatrix}^\top.
	\end{align}
	Notice that depending on the selection of $\beta$ and $\gamma$, \eqref{eq: universal example} describes various existing algorithms. For example, the gradient method corresponds to the case $\beta=\gamma=0$ (see Example \ref{example: gradient method}). In Nesterov's accelerated method, we have $\beta=\gamma$ \cite{nesterov1983method}. Finally, we recover Heavy-ball method by setting $\gamma=0$ \cite{POLYAK19641}. In Table \ref{table}, we provide various parameter selections and convergence rates for the gradient method, the heavy-ball method, and Nesterov's accelerated method \cite{nesterov1983method,POLYAK19641,lessard2016analysis}.
\end{example}
%

\begin{table*}
	\flushleft
	\centering
	\caption{\small Exponential decay rate of the algorithm in \eqref{eq: universal example} for various parameter selections.}
	\begin{tabular}{|l|l|l|}
		\hline
		Algorithm                                                            & Parameters                                                                                                                                                    & Exponential Decay Rate 
		\\ \hline
		\begin{tabular}[c]{@{}l@{}}Gradient\\ (Strongly Convex)\end{tabular} & $h=\frac{1}{L_f}$,  $\beta=\gamma=0$  & $\rho=1-\frac{1}{\kappa_f}$                 \\ \hline
		\begin{tabular}[c]{@{}l@{}}Gradient\\ (Strongly Convex)\end{tabular} & $h=\frac{2}{m_f+L_f}$, $\beta=\gamma=0$  & $\rho=\frac{\kappa_f-1}{\kappa_f+1}$  \\ \hline
		\begin{tabular}[c]{@{}l@{}}Nesterov\\ (Strongly Convex)\end{tabular} & $h=\frac{1}{L_f}$, $\beta=\gamma=\frac{\sqrt{\kappa_f}-1}{\sqrt{\kappa_f}+1}$ & $\rho = \sqrt{1-\frac{1}{\sqrt{\kappa_f}}}$ \\ \hline
		\begin{tabular}[c]{@{}l@{}}Nesterov\\ (Quadratics)\end{tabular}      & \begin{tabular}[c]{@{}l@{}}$h = \frac{4}{3L_f+m_f}$, $\beta = \frac{\sqrt{3\kappa_f+1}-2}{\sqrt{3 \kappa_f+1}+2}$\end{tabular}                          & $\rho = 1 - \frac{2}{\sqrt{3\kappa_f+1}}$            \\ \hline
		\begin{tabular}[c]{@{}l@{}}Heavy-ball\\ (Quadratics)\end{tabular}    & \begin{tabular}[c]{@{}l@{}}$h = \frac{4}{(\sqrt{L_f}+\sqrt{m_f})^2}$, $\gamma= \left(\frac{\sqrt{\kappa_f}-1}{\sqrt{\kappa_f}+1}\right)^2$,  $\beta=0$\end{tabular} & $\rho = \frac{\sqrt{\kappa_f}-1}{\sqrt{\kappa_f}+1}$ \\ \hline
	\end{tabular}
	\label{table}
\end{table*}

\subsection{Exponential Convergence via SDPs}
To measure the progress of the algorithm in \eqref{eq: algorithm canonical form} towards optimality, we make use of the following Lyapunov function \cite{fazlyab2017analysis}:
\begin{align} \label{eq: Lyapunov function}
V_k = f(x_k)-f(x_{\star}) + (\xi_k-\xi_{\star})^\top P (\xi_k-\xi_{\star}),
\end{align}
where $x_{\star}$ is the unique minimizer of $f$ and $P \in \mathbb{S}_{+}^n$ is an unknown positive \emph{semidefinite} matrix that does not depend on $k$. The first term in \eqref{eq: Lyapunov function} is the suboptimality of $x_k$ and the second term is a weighted ``distance" between the state $\xi_k$ and the fixed point $\xi_{\star}$. Notice that by this definition, $V_k$ is positive everywhere and zero at optimality. Suppose we select $P$ such that the Lyapunov function satisfies the inequality
\begin{align} \label{eq: Lyapunov exponential decay}
V_{k+1} \leq \rho^2 V_k \quad \mbox{for all } k,
\end{align}
for some $0\leq \rho<1$. By iterating down \eqref{eq: Lyapunov exponential decay} to $k=0$, we can conclude that $V_k \leq \rho^{2k} V_0$ for all $k$. This implies that
\begin{align} \label{eq: convergence result}
0 \leq f(x_k) - f(x_{\star}) &\leq \rho^{2k} V_0. 
\end{align}
In other words, the algorithm exhibits an $\mathcal{O}(\rho^{2k})$ convergence rate--in terms of objective values-- if the Lyapunov function satisfies the decrease condition \eqref{eq: Lyapunov exponential decay}. In the following result, developed in \cite{fazlyab2017analysis}, we present a matrix inequality whose feasibility implies \eqref{eq: Lyapunov exponential decay} and, hence, the exponential convergence of the algorithm. For notational convenience, we drop the argument $\theta$ wherever the dependence of the matrices $A(\theta),B(\theta),C(\theta),$ and $E(\theta)$ on $\theta$ is clear from the context.

\begin{theorem} \label{thm: exponential convergence}
	Let $x_{\star} = \mathrm{argmin}_{x} \ f(x)$, where $f \in \mathcal{F}(m_f,L_f)$. Consider the dynamical system in \eqref{eq: algorithm canonical form}, whose fixed point satisfies \eqref{eq: algorithm fixed point}. Define the matrices
	\begin{subequations} \label{eq: LMI matrices}
		\begin{align}
		M^0 &=\begin{bmatrix}
		    A^\top P A \!-\! \rho^2 P& A^\top P B\\  B^\top P A& B^\top P B \end{bmatrix} \\
		M^1 &=N^1+N^2\\
		M^2 &=N^1+N^3\\
		M^3 &=N^4, 
		\end{align}
	\end{subequations}
	where
	\begin{align*}
	N^{1} \!&= \begin{bmatrix}
	E A\!-\! C & EB \\ 0 & I_d 
	\end{bmatrix}^\top
	\begin{bmatrix}
	\frac{L_f}{2} I_d & \frac{1}{2} I_d \\ \frac{1}{2} I_d & 0
	\end{bmatrix}
	\begin{bmatrix}
	E A\!-\! C & EB \\ 0 & I_d 
	\end{bmatrix}  \nonumber \\ \nonumber
	N^{2} &=\begin{bmatrix}
	C-E & 0 \\ 0 & I_d
	\end{bmatrix}^\top \begin{bmatrix}
	-\frac{m_f}{2} I_d & \frac{1}{2} I_d \\ \frac{1}{2} I_d & 0
	\end{bmatrix} \begin{bmatrix}
	C-E & 0 \\ 0 & I_d
	\end{bmatrix}  \\ \nonumber 
	N^{3} &= \begin{bmatrix}
	C & 0 \\ 0 & I_d
	\end{bmatrix}^\top \begin{bmatrix}
	-\frac{m_f}{2} I_d & \frac{1}{2} I_d \\ \frac{1}{2} I_d & 0
	\end{bmatrix} \begin{bmatrix}
	C & 0 \\ 0 & I_d
	\end{bmatrix} \\
	N^{4} &=\begin{bmatrix}
	C & 0 \\ 0 & I_d
	\end{bmatrix}^\top \begin{bmatrix}
	\frac{-m_fL_f}{m_f+L_f} I_d & \frac{1}{2} I_d \\ \frac{1}{2} I_d  & \frac{-1}{m_f+L_f} I_d 
	\end{bmatrix} \begin{bmatrix}
	C & 0 \\ 0 & I_d
	\end{bmatrix}. \nonumber
	\end{align*} 
	Suppose there exists a positive semidefinite $P \in \mathbb{S}_{+}^n$, and nonnegative scalars $0<\rho\leq1$, $\lambda \geq 0$ that satisfy the following matrix inequality:
	\begin{align} \label{eq: LMI}
	M(\theta,\rho,\lambda,P):= M^0+\!\rho^2 M^1 \!+\! (1\!-\!\rho^2) M^2 \!+\! \lambda M^3 \preceq 0.
	\end{align}
	Then the sequence $\{x_k\}$ satisfies
	\begin{align} \label{eq: convergence rate}
	0 \leq f(x_k)-f(x_{\star}) \leq \rho^{2k} V_0 \quad \mbox{for all } k.
	\end{align}
\end{theorem}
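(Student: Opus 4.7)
The plan is to show $V_{k+1}\leq\rho^2 V_k$ by recasting every relevant inequality as a quadratic form in the single stacked vector $w_k=[(\xi_k-\xi_\star)^\top,\ \nabla f(y_k)^\top]^\top$, and then reading \eqref{eq: LMI} as the sum of these forms. The choice of $w_k$ is natural because $\nabla f(y_\star)=0$, so every increment appearing in the dynamics or in the convexity/smoothness inequalities can be written as a linear function of $w_k$.

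First, using the dynamics and the fixed-point identities \eqref{eq: algorithm fixed point}, I would rewrite $\xi_{k+1}-\xi_\star=A(\xi_k-\xi_\star)+B\,\nabla f(y_k)$, which gives
\[
(\xi_{k+1}-\xi_\star)^\top P(\xi_{k+1}-\xi_\star)-\rho^2(\xi_k-\xi_\star)^\top P(\xi_k-\xi_\star)=w_k^\top M^0 w_k.
\]
I would then decompose the objective-value part of $V_{k+1}-\rho^2 V_k$ via the telescoping identity
\[
f(x_{k+1})-\rho^2 f(x_k)-(1-\rho^2)f(x_\star)=[f(x_{k+1})-f(y_k)]+\rho^2[f(y_k)-f(x_k)]+(1-\rho^2)[f(y_k)-f(x_\star)].
\]

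Each bracket on the right admits a quadratic upper bound in $w_k$. Smoothness \eqref{eq: Lipschitz continuity function values} at $y_k$ applied to $x_{k+1}$, combined with the identity $x_{k+1}-y_k=(EA-C)(\xi_k-\xi_\star)+EB\,\nabla f(y_k)$, yields $f(x_{k+1})-f(y_k)\leq w_k^\top N^1 w_k$. Strong convexity \eqref{eq: strong convexity function values} at $y_k$ applied to $x_k$ and to $x_\star$ respectively, combined with $y_k-x_k=(C-E)(\xi_k-\xi_\star)$ and $y_k-x_\star=C(\xi_k-\xi_\star)$ (both using $y_\star=x_\star$), gives $f(y_k)-f(x_k)\leq w_k^\top N^2 w_k$ and $f(y_k)-f(x_\star)\leq w_k^\top N^3 w_k$. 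The definitions $M^1=N^1+N^2$ and $M^2=N^1+N^3$ are arranged so that $\rho^2 M^1+(1-\rho^2)M^2=N^1+\rho^2 N^2+(1-\rho^2)N^3$, exactly the sum of the three bounds. Finally, $M^3=N^4$ is (up to the positive factor $1/(2(m_f+L_f))$) the IQC quadratic form $Q_f$ of \eqref{eq: strongly convex IQC} evaluated at the pair $(y_k-y_\star,\ \nabla f(y_k)-\nabla f(y_\star))$, so $w_k^\top M^3 w_k\geq 0$; adding $\lambda M^3$ with $\lambda\geq 0$ only slackens the bound.

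Assembling all pieces gives
\[
V_{k+1}-\rho^2 V_k\leq w_k^\top\bigl[M^0+\rho^2 M^1+(1-\rho^2)M^2+\lambda M^3\bigr]w_k=w_k^\top M(\theta,\rho,\lambda,P)w_k\leq 0
\]
by the hypothesis \eqref{eq: LMI}, which is \eqref{eq: Lyapunov exponential decay}. A one-line induction then yields $V_k\leq\rho^{2k}V_0$, and $P\succeq 0$ lets me drop the quadratic term to conclude $f(x_k)-f(x_\star)\leq V_k\leq \rho^{2k}V_0$. The main obstacle I expect is the bookkeeping in the second paragraph: verifying that each $N^i$ really encodes the expansion $\tfrac{L_f}{2}\|\cdot\|^2+\nabla f(y_k)^\top(\cdot)$ of smoothness, or $-\tfrac{m_f}{2}\|\cdot\|^2+\nabla f(y_k)^\top(\cdot)$ of strong convexity, with the right block of $\xi_k-\xi_\star$ and the correct sign. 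Once those identifications are pinned down (which is essentially a matter of unpacking the sandwich $[\,\cdot\,]^\top[\,\cdot\,][\,\cdot\,]$), the remainder is a convex combination of known inequalities and a routine induction.
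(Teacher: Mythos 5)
Your proposal is correct and follows the same argument as the paper's (deferred) proof: writing $V_{k+1}-\rho^2 V_k$ as a quadratic form in $w_k=[(\xi_k-\xi_\star)^\top,\ \nabla f(y_k)^\top]^\top$, bounding the objective-value part via the telescoping split into $f(x_{k+1})-f(y_k)$, $f(y_k)-f(x_k)$, $f(y_k)-f(x_\star)$ matched to $N^1,N^2,N^3$, and adding the nonnegative interpolation form $\lambda N^4$ as an S-procedure term before invoking \eqref{eq: LMI}. The identifications of each $N^i$ and the factor $\tfrac{1}{2(m_f+L_f)}$ relating $N^4$ to $Q_f$ all check out, so the remaining bookkeeping you flag is indeed routine.
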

\begin{proof}
	See \cite{fazlyab2017analysis}.
\end{proof}

According to Theorem \ref{thm: exponential convergence}, any triple $(\rho,P,\lambda) \in [0,1] \times \mathbb{S}_{+}^n \times \mathbb{R}_{+}$ that satisfies the matrix inequality in \eqref{eq: LMI} certifies an $\mathcal{O}(\rho^{2k})$ convergence rate for the algorithm. In particular, the fastest convergence rate can be found by solving the following optimization problem:
\begin{alignat}{2} \label{eq: algorithm analysis problem}
&\mathrm{minimize} \quad && \rho^2 \\ \nonumber 
&\text{subject to}  \quad && M(\theta,\rho,\lambda,P) \preceq 0 \nonumber \\ \nonumber
& && P \succeq 0, \ 0 \leq \rho \leq 1, \ \lambda \geq 0,
\end{alignat}
where $\theta$ is given (the algorithm parameters), and the decision variables are $\lambda, \rho, P$. Note that \eqref{eq: algorithm analysis problem} is a quasiconvex program since the constraint is affine in $\lambda$ and $P$ for a fixed $\rho$. We can therefore use a bisectioning search to find the smallest possible value of the decay rate $\rho$. This approach has been pursued in \cite{lessard2016analysis} using a different Lyapunov function. Furthermore, the corresponding matrix inequality for proximal variants of \eqref{eq: algorithm canonical form} has been developed in \cite{fazlyab2017dynamical}.

Note that for finding an $\epsilon$-accurate ($0<\epsilon\leq 1$) solution for the decay rate $\rho$, the computational complexity is at most $\mathcal{O}(q^3 \log_2(\epsilon^{-1}))$ where $q$, the total number of unknowns, is independent of the dimension $d$ of the optimization problem \eqref{eq: unconstrained problem} as we remark below. 
\begin{remark}\label{remark: structure}
	\normalfont We can often exploit some special structure in the matrices $A,B,C$, and $E$ {to reduce the dimension of \eqref{eq: LMI}. For many algorithms, these matrices are in the form
		\begin{align*}
		A=\bar{A} \otimes I_d, B=\bar{B} \otimes I_d, C = \bar{C} \otimes I_d, E=\bar{E} \otimes I_d,
		\end{align*}
		where now $\bar{A},\bar{B},\bar{C},$ and $\bar{E}$ are lower dimensional matrices independent of $d$ \cite[$\S$4.2]{lessard2016analysis}}. By selecting $P=\bar{P} \otimes I_d$, where $\bar{P}$ is a lower dimensional matrix, we can factor out all the Kronecker products $\otimes I_d$ from the matrices $M^0,M^1,M^2,M^3$ and make the dimension of the corresponding matrix inequality in \eqref{eq: LMI} independent of $d$. In particular, a multi-step method with $r \geq 1$ steps yields an $(r+1) \times (r+1)$ matrix inequality. For instance, the gradient method ($r=1$) and Nesterov's accelerated method ($r=2$) yield $2 \times 2$ and $3 \times 3$ matrix inequalities, respectively. 
\end{remark}

\begin{remark}[Non-monotone Algorithms] \normalfont
	We emphasize that in the development of Theorem \ref{thm: exponential convergence} although we require the Lyapunov function to decrease geometrically at each iteration (see \eqref{eq: Lyapunov exponential decay}), the resulting bound in \eqref{eq: convergence result} does not imply that the sequence of objective values is monotone, i.e., \eqref{eq: convergence rate} does not imply $f(x_{k+1}) \leq f(x_{k})$. This allows us to analyze the convergence properties of nonmonotone algorithms such as Nesterov's accelerated method. 
\end{remark}

\begin{remark}[Feasibility of \eqref{eq: algorithm analysis problem}] \normalfont
	The matrix inequality \eqref{eq: LMI} provides an upper bound on the true decay rate and, therefore, \eqref{eq: LMI} is sufficient for the exponential convergence result in \eqref{eq: convergence rate}. In other words, there might be an exponentially convergence algorithm for which \eqref{eq: LMI} is not feasible. Nevertheless, it has been shown in \cite{fazlyab2017analysis} that the bounds are  not conservative. For instance, for Nesterov's accelerated method, the rate bound obtained by solving \eqref{eq: algorithm analysis problem} turns out to be even better than the theoretical rate bound proved by Nesterov \cite{fazlyab2017analysis}.
\end{remark}


%

\section{Algorithm Synthesis} \label{section: Algorithm Synthesis}

We saw in the previous section that the exponential stability of a given first-order algorithm can be certified by solving an SDP feasibility problem. More precisely, given an algorithm in \eqref{eq: algorithm canonical form} with a prespecified value of $\theta$ (the tuning parameters of the algorithm), we can search for a suitable Lyapunov function and establish a rate bound for the algorithm by solving a quasiconvex program. A natural question to ask is whether we can leverage the same framework to do algorithm design. We formalize this problem as follows.
\begin{problem} \normalfont \label{design problem statement}
	Let $x_{\star} = \mathrm{argmin}_{x} \ f(x)$, where $f \in \mathcal{F}(m_f,L_f)$. Given a parameterized family of first-order methods given by \eqref{eq: algorithm canonical form}, tune the parameters $\theta$ of the algorithm, within a compact set $\Theta \subset \mathbb{R}^p$, such that the resulting algorithm converges at an $\mathcal{O}(\rho^{2k})$ rate to $x_{\star}$ with a minimal $\rho$.
\end{problem}
Using the result of Theorem \ref{thm: exponential convergence}, Problem \ref{design problem statement} can be formally written as the following \emph{nonconvex} optimization problem:
\begin{alignat}{2} \label{eq: algorithm design problem}
&\mathrm{minimize} \quad &&\rho^2 \\ \nonumber 
&\text{subject to} \quad && M(\theta,\rho,\lambda,P) \!\preceq \!0 \nonumber \\ \nonumber
& && P \succeq 0, \ 0 \leq \rho \leq 1, \ \lambda \geq 0,  \ \theta \in \Theta,
\end{alignat}
where the decision variables are now $\theta, P, \lambda$, and $\rho$. We recall from \eqref{eq: LMI matrices} that the parameter $\theta$ appears in \eqref{eq: algorithm design problem} through the matrices $A,B,C,E,M^1,M^2,M^3$. Intuitively, \eqref{eq: algorithm design problem} searches for the parameters $\theta$ of the algorithm for optimal performance (minimal $\rho$) while respecting the stability condition \eqref{eq: Lyapunov exponential decay}, which is imposed by the matrix inequality constraint in \eqref{eq: algorithm design problem}. 

Note that if we fix $\theta=\theta_0$, which means the algorithm parameters are given, then \eqref{eq: algorithm design problem} reduces to the quasiconvex program in \eqref{eq: algorithm analysis problem}. This suggests a natural but inefficient way to solve \eqref{eq: algorithm design problem}: We could do an exhaustive search over the parameter space $\Theta$, and solve \eqref{eq: algorithm analysis problem} to find the optimal $\rho^2$ for each value of $\theta$. We, therefore, need to solve a sequence of quasiconvex programs in order to find the optimal tuning. As an illustration, we implement this approach to tune the parameters of the Nesterov's accelerated method (the algorithm in \eqref{eq: universal example} with $\gamma=\beta$), where the tuning parameters are the stepsize $h$ and the momentum coefficient $\beta$. In Figure \ref{fig: robust_nesterov_contour}, we plot the level curves of the convergence factor $\rho$ as a function of $\theta = [h \ \beta]^\top$ in the region $\Theta = [0 \ 2/L_f] \times [0 \ 1]$. 

\begin{figure}
	\centering
	\includegraphics[width=\linewidth]{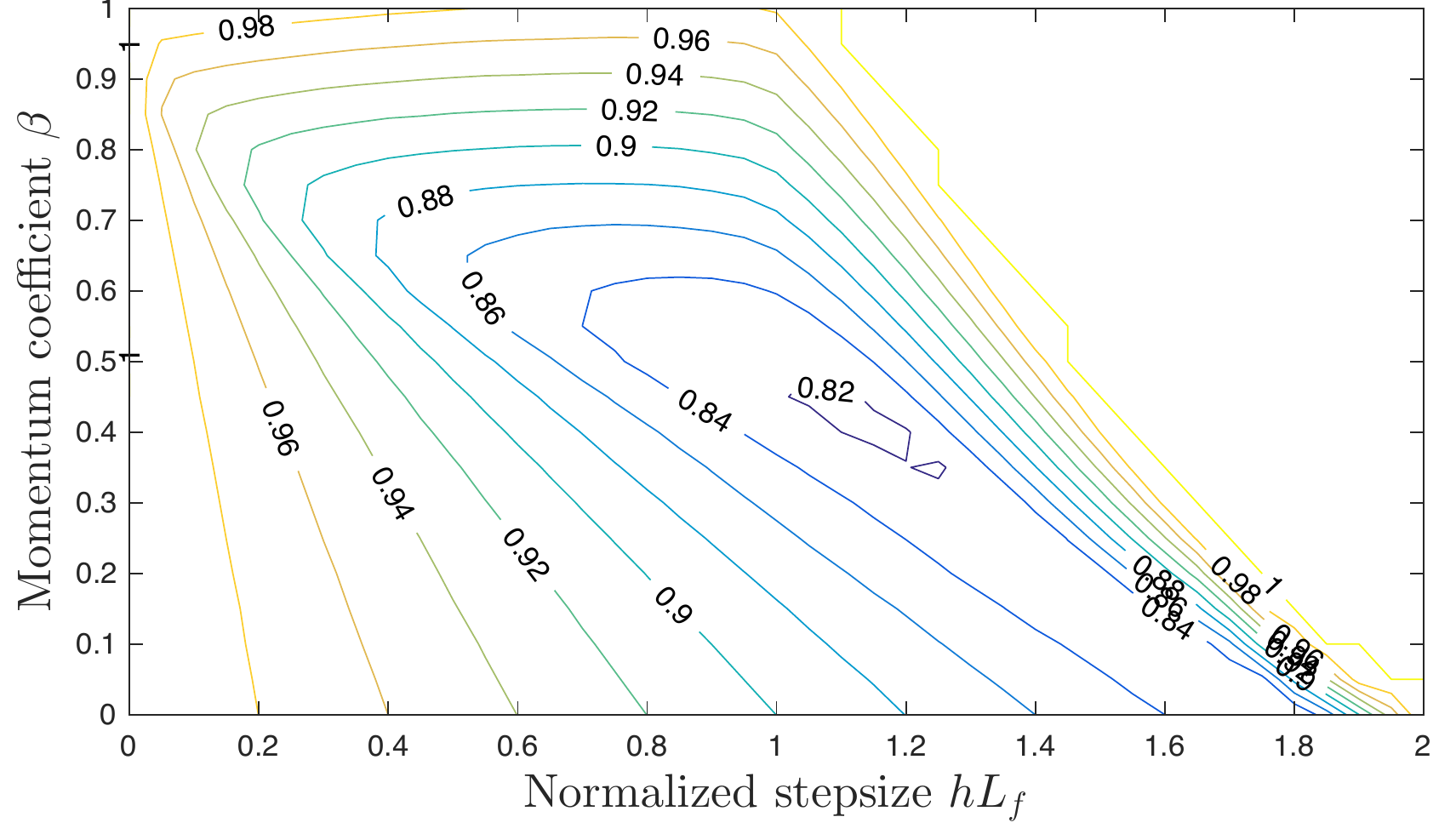}
	\caption{\small Plot of convergence rate $\rho$ of Nesterov's accelerated method as a function of stepsize $h$ and momentum parameter $\beta$ for $\kappa_f=10$.}
	\label{fig: robust_nesterov_contour}
\end{figure}

Note that, in general, the exhaustive search approach described above becomes prohibitively costly as the dimension and/or the granularity of the search space $\Theta$ increase. We therefore need an efficient way to solve \eqref{eq: algorithm design problem}. Although this problem is nonconvex, the special structure of the constraint set makes the problem tractable. To see this, we note that all the entries of the constraint matrix in \eqref{eq: algorithm design problem} are \emph{polynomial} functions of the decision variables. This matrix inequality constraint can be alternatively ``scalarized" and rewritten in terms of scalar polynomial inequalities (e.g., by considering minors, or coefficients of the
characteristic polynomial). The resulting optimization problem is of the form
\begin{alignat*}{2}
&\text{minimize} \quad  &&p\left(\mathbf{x}\right)\\
&\text{subject to } \quad && g_{i}\left(\mathbf{x}\right)\geq0, \ \text{for all }i=1,\ldots,m,
\end{alignat*}
where $\bx=[\rho^2 \ \theta^\top \ \text{vec}(P)^\top]^\top$ is the vector of decision variables, and $p(\bx)$ and $g_i(\bx), \ i=1,\ldots,m$ are all polynomials. It is here that we can draw a direct connection from the algorithm design problem in \eqref{eq: algorithm design problem} to polynomial optimization, which are tractable problems in many cases \cite{blekherman2012semidefinite}. We briefly introduce polynomial optimization next.



\subsection{Sum-of-Squares Programs} \label{subsection: SOS}

The main difficulty in solving problems involving polynomial constraints,
such as the one in \eqref{eq: algorithm design problem}, is the lack of efficient numerical methods
able to handle multivariate nonnegativity conditions. A computationally
efficient approach is to use sum-of-squares (SOS) relaxations \cite{Parrilo:Phd,Lasserre}.
In what follows, we introduce the basic results used in our derivations.
\begin{definition}
	A multivariate polynomial of degree $2d$ in $n$ variables with real
	coefficients, $p(x_{1},x_{2},...,x_{n})=p(\mathbf{x})\in\mathbb{R}[\mathbf{x}]{}_{n,2d}$,
	is a \emph{sum-of-squares} (SOS) if there exist polynomials $q_{1}(\mathbf{x}),...,q_{m}(\mathbf{x})\in\mathbb{R}[\mathbf{x}]_{n,d}$
	such that 
	\begin{equation}
	p(\mathbf{x})=\sum_{k=1}^{m}q_{k}^{2}(\mathbf{x}).\label{SOSeqn}
	\end{equation}
\end{definition}
We will denote the set of SOS polynomials in $n$ variables of degree at most $2d$ by $\Sigma_{n,2d}\left[\mathbf{x}\right]$. A polynomial being an SOS is sufficient to certify its global nonnegativity, since any $p\in\Sigma_{n,2d}\left[\mathbf{x}\right]$ satisfies $p(\mathbf{x})\geq0$ for all $\mathbf{x}\in\mathbb{R}^{n}$. Hence, $\Sigma_{n,2d}\subseteq P_{n,2d}$, where $P_{n,2d}\left[\mathbf{x}\right]$ is the set of nonnegative
polynomials in $\mathbb{R\left[\mathbf{x}\right]}_{n,2d}$. Given a polynomial $p\left(x\right)\in\mathbb{R}\left[\mathbf{x}\right]{}_{n,2d}$,
the existence of an SOS decomposition of the form (\ref{SOSeqn}) is
equivalent to the existence of a positive semidefinite matrix $Q\in\mathbb{S}_{+}^{{n+d \choose d}}$,
called the \emph{Gram matrix}, such that
\begin{equation}
p(\mathbf{x})=\left[\mathbf{x}\right]_{d}^{T}Q\left[\mathbf{x}\right]_{d},\label{Q}
\end{equation}
where $\left[\mathbf{x}\right]_{d}=\left[1,x_{1},\ldots,x_{n},x_{1}^{2},x_{1}x_{2},\ldots,x_{n}^{d}\right]$
is the vector of all ${n+d \choose d}$ monomials in $x_{1},\ldots,x_{n}$
of degree at most $d$. Notice that the equality constraint in (\ref{Q})
is affine in the matrix $Q$, since the expansion of the right-hand
side results in a polynomial whose coefficients depend affinely on
the entries of $Q$ and must be equal to the corresponding coefficients
of the given polynomial $p\left(\mathbf{x}\right)$. Hence, finding
an SOS decomposition is computationally equivalent to finding a positive
semidefinite matrix $Q$ subject to the affine constraint in (\ref{Q}),
which is a semidefinite program \cite{Parrilo:Phd,SOSTOOLS}.

Using the notion of sos polynomials, we can now define the class of
\emph{sum-of-squares programs} (SOSP)\emph{.} An SOSP is an optimization
program in which we maximize a linear function over a feasible set
given by the intersection of an affine family of polynomials and the
set $\Sigma\left[\mathbf{x}\right]$ of SOS polynomials in $\mathbb{R}\left[\mathbf{x}\right]$,
as described below \cite{Parrilo:book}:
\begin{align*}
&\text{maximize }  \ \ b_{1}y_{1}+\cdots+b_{m}y_{m}\\
&\text{subject to } \ \ p_{i}\left(\mathbf{x};\mathbf{y}\right)\in\Sigma\left[\mathbf{x}\right], \quad \text{for all }i=1,\ldots,k,
\end{align*}
where $\by = [y_1 \cdots y_m]^\top$ is the vector of decision variables, $p_{i}\left(\mathbf{x};\mathbf{y}\right)=c_{i}\left(\mathbf{x}\right)+a_{i1}\left(\mathbf{x}\right)y_{1}+\cdots+a_{im}\left(\mathbf{x}\right)y_{m}$,
and $c_{i},\,a_{ij}$ are given multivariate polynomials in $\mathbb{R}\left[\mathbf{x}\right]$. Note that in the above optimization problem, $\bx$ is the vector of indeterminates and not the decision variables. Despite their generality, it can be proved that SOSPs are equivalent
to SDPs; hence, they are convex programs and can be solved in polynomial
time \cite{SOSTOOLS}. In recent years, SOSPs have been used as convex
relaxations for various computationally hard optimization and control
problems (see, for example, \cite{Parrilo:Phd,sdprelax,Lasserre,CAND,majumdar2012control}
and the volume \cite{GarulliHenrion}).

%
The notion of positive definiteness and sum-of-squares of scalar-valued polynomials can be extended to polynomial matrices, i.e., matrices with entries in $\mathbb{R}[\bx]$. The definition of an sos matrix is as follows \cite{ParriloGatermann}.
\begin{definition}
	A symmetric polynomial matrix $P(\bx) \in \mathbb{R}[x]^{m \times m}, \ \bx \in \mathbb{R}^n$, is an sos matrix if there exists a polynomial matrix $M(\bx) \in \mathbb{R}^{s \times m}$ for some positive integer $s$, such that $P(\bx)=M^\top(\bx)M(\bx)$.
\end{definition}
Since an $m \times m$ matrix is simply a representation of an $m$-variate quadratic form, we can always interpret an sos matrix in terms of a polynomial with $m$ additional variables. The following lemma makes this precise.

\begin{lemma} \label{lemma: SOSM}
	Consider a symmetric matrix with polynomial entries $\mathbf{P}(\mathbf{x})\in\mathbb{R}[\mathbf{x}]^{m\times m}$,
	and let $\mathbf{z}=[z_{1},\ldots,z_{m}]^{T}$ be a vector of indeterminates.
	Then $\mathbf{P}(\mathbf{x})$ is a \emph{sum-of-squares matrix} (SOSM)
	if $\mathbf{z}^{T}\mathbf{P}(\mathbf{x})\mathbf{z}$ is an SOS polynomial
	in $\mathbb{R}[\mathbf{x},\mathbf{z}]$.
\end{lemma}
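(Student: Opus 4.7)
The plan is to establish the equivalence by proving both directions. The forward direction (if $\mathbf{P}(\mathbf{x})$ is an SOSM then $\mathbf{z}^\top \mathbf{P}(\mathbf{x})\mathbf{z}$ is SOS) is essentially tautological; the substantive content is the reverse direction, which requires extracting the factorization $M^\top M$ from an arbitrary SOS certificate of the scalar polynomial $\mathbf{z}^\top \mathbf{P}(\mathbf{x})\mathbf{z}$.

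For the forward direction, I would start from the definition $\mathbf{P}(\mathbf{x}) = M(\mathbf{x})^\top M(\mathbf{x})$ with $M(\mathbf{x}) \in \mathbb{R}[\mathbf{x}]^{s \times m}$, and observe that
\begin{equation*}
\mathbf{z}^\top \mathbf{P}(\mathbf{x})\mathbf{z} = (M(\mathbf{x})\mathbf{z})^\top (M(\mathbf{x})\mathbf{z}) = \sum_{i=1}^{s} \bigl((M(\mathbf{x})\mathbf{z})_i\bigr)^2,
\end{equation*}
where each component $(M(\mathbf{x})\mathbf{z})_i$ is a polynomial in $\mathbb{R}[\mathbf{x},\mathbf{z}]$, exhibiting $\mathbf{z}^\top \mathbf{P}(\mathbf{x})\mathbf{z}$ as an SOS in $\mathbb{R}[\mathbf{x},\mathbf{z}]$.

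For the reverse direction, suppose $\mathbf{z}^\top \mathbf{P}(\mathbf{x})\mathbf{z} = \sum_{k=1}^{s} q_k(\mathbf{x},\mathbf{z})^2$ for some $q_k \in \mathbb{R}[\mathbf{x},\mathbf{z}]$. The key structural fact I would exploit is that $\mathbf{z}^\top \mathbf{P}(\mathbf{x})\mathbf{z}$ is \emph{homogeneous of degree two} in $\mathbf{z}$. I would decompose each $q_k$ by its degree in $\mathbf{z}$, writing $q_k(\mathbf{x},\mathbf{z}) = \sum_{j \geq 0} q_k^{(j)}(\mathbf{x},\mathbf{z})$ with $q_k^{(j)}$ homogeneous of degree $j$ in $\mathbf{z}$ (coefficients in $\mathbb{R}[\mathbf{x}]$). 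Matching the homogeneous components of $\sum_k q_k^2$ with those of $\mathbf{z}^\top \mathbf{P}(\mathbf{x})\mathbf{z}$ forces $\sum_k (q_k^{(0)}(\mathbf{x}))^2 = 0$, hence $q_k^{(0)} \equiv 0$, and then equating the degree-two component in $\mathbf{z}$ yields $\sum_k \bigl(q_k^{(1)}(\mathbf{x},\mathbf{z})\bigr)^2 = \mathbf{z}^\top \mathbf{P}(\mathbf{x})\mathbf{z}$. Since each $q_k^{(1)}$ is linear in $\mathbf{z}$, I can write $q_k^{(1)}(\mathbf{x},\mathbf{z}) = \mathbf{m}_k(\mathbf{x})^\top \mathbf{z}$ with $\mathbf{m}_k(\mathbf{x}) \in \mathbb{R}[\mathbf{x}]^m$, and obtain
\begin{equation*}
\mathbf{z}^\top \mathbf{P}(\mathbf{x})\mathbf{z} = \mathbf{z}^\top \Bigl(\textstyle\sum_{k=1}^{s} \mathbf{m}_k(\mathbf{x})\mathbf{m}_k(\mathbf{x})^\top\Bigr)\mathbf{z} \quad \text{for all } \mathbf{z}.
\end{equation*}
Since both sides are symmetric quadratic forms in $\mathbf{z}$ that agree identically, their matrix representations coincide, so $\mathbf{P}(\mathbf{x}) = M(\mathbf{x})^\top M(\mathbf{x})$ with $M(\mathbf{x}) \in \mathbb{R}[\mathbf{x}]^{s\times m}$ having rows $\mathbf{m}_k(\mathbf{x})^\top$, which is precisely the SOSM condition.

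The main obstacle, such as it is, lies in the bookkeeping of the homogeneous decomposition in $\mathbf{z}$: one must verify carefully that cross terms of the form $q_k^{(i)} q_k^{(j)}$ with $i+j \neq 2$ either vanish identically or are forced to cancel by matching higher-order $\mathbf{z}$-homogeneous components on both sides (the right-hand side being zero in all degrees other than two). Since $q_k^{(0)}=0$ eliminates the degree-0 and degree-1 equations as constraints on $q_k^{(j)}$ for $j \geq 2$, and the degree-2 equation alone already supplies a valid factorization using only the $q_k^{(1)}$ parts, the higher-degree constraints do not obstruct the extraction of $M(\mathbf{x})$. Thus the proof reduces to a clean homogeneity argument, and no deeper convex-algebraic machinery is required.
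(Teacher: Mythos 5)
Your proof is correct: the paper states this lemma without proof (treating it as a standard fact from the sum-of-squares matrix literature), and your homogeneity-in-$\mathbf{z}$ argument is the standard way to establish the nontrivial direction. The key steps — forcing $q_k^{(0)}\equiv 0$ from the vanishing of the degree-zero component, reading off $\mathbf{P}(\mathbf{x})=M(\mathbf{x})^\top M(\mathbf{x})$ from the degree-two component, and noting that the higher-degree components impose no further obstruction — are all handled correctly.
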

\noindent Obviously, a polynomial matrix $\mathbf{P}\left(\mathbf{x}\right)$
being SOSM provides an explicit certificate for $\mathbf{P}\left(\mathbf{x}\right)$
being positive semidefinite for all $\mathbf{x}\in\mathbb{R}^{n}$.

\subsection{Polynomial Optimization Problems}
One application of SOSP is the global optimization of a polynomial $p(\bx)$. To this end, rather than directly computing a minimizer $\bx_{\star}$ of $p(\bx)$, we instead focus on obtaining the best possible lower bound on its \emph{optimal value} $p(\bx_{\star})$. This viewpoint is based on the observation that a real-valued number $\gamma$ is a global lower bound of $p(\bx)$ if and only if the polynomial $p(\bx)-\gamma$ is nonnegative for all $\bx$. The best lower bound on $p(\bx)$ is thus obtained by solving the optimization problem
\begin{align} \label{eq: best lower bound}
p_{\star} = \mathrm{sup}_{\gamma} \ \ \gamma \quad \text{subject to} \quad p(\bx)-\gamma \geq 0, 
\end{align}
where the decision variable is now $\gamma$ and the original decision variable $\bx$ acts as an indeterminate. Observe that \eqref{eq: best lower bound} is a convex optimization problem with infinitely many constraints. By replacing the nonnegativity condition with an sos constraint, we obtain the following optimization problem
\begin{align*}
p_{sos} = \mathrm{sup}_{\gamma} \ \gamma \quad \text{subject to} \quad p(\bx)-\gamma \in \Sigma[\bx].
\end{align*}
Note that for multivariate polynomials, nonnegativity and sos are not equivalent. More precisely, we have the inclusion $\Sigma_{n,2d}\subseteq P_{n,2d}$. Therefore, since the feasible set of the second problem is a subset of the feasible set of the first problem, we have the inequality $p_{sos} \leq p_{\star}$. However, for relatively small problems, we often have $p_{sos}=p_{\star}$, i.e., there is no loss of optimality by using sos relaxations. Nevertheless, even in those situations where $p_{sos} < p_{\star}$, we can improve the lower bound $p_{sos}$ by producing stronger sos conditions \cite{blekherman2012semidefinite}.

In our particular application, we need to optimize a multivariate polynomial over a set described by polynomial inequalities:
%
%
\begin{align} \label{eq: constrained polynomial opt}
&\text{minimize}_{\bx \in \mathbb{R}^d} && p\left(\mathbf{x}\right) \nonumber \\
&\text{subject to } && g_{i}\left(\mathbf{x}\right)\geq0, \ \text{for all }i=1,\ldots,m,
\end{align}
where $p\left(\mathbf{x}\right)$, $g_{i}\left(\mathbf{x}\right)\in\mathbb{R}\left[\mathbf{x}\right]$
for all $i=1,\ldots,m$. Similar to the unconstrained case, we can instead find the best lower bound on $p(\bx)$ on the constraint set, as follows:
\begin{align*}
&\text{maximize}_{\gamma} && \gamma \\
&\text{subject to } && p(x)-\gamma \geq 0 \quad \text{for all } x \in S,
\end{align*}
where $S = \{\bx \in \mathbb{R}^n \colon  \ g_{i}\left(\mathbf{x}\right)\geq0, \ \text{for all }i=1,\ldots,m\}$. The corresponding sos relaxation requires $p(\bx)$ to be sos on $S$. Recalling the formal similarity with weak duality and Lagrange multipliers, it is natural to consider the following decomposition for $p(\bx)$:
\begin{align*}
p(\bx) = s_{0}\left(\mathbf{x}\right)+\sum_{i=1}^{m}s_{i}\left(\mathbf{x}\right)g_{i}\left(\mathbf{x}\right) \quad \bx \in S,
\end{align*}
where $s_0(\bx)$ and $s_i(\bx)$ are sos polynomials that are determined by matching the coefficients of the left- and right-hand side. This particular decomposition implies that for any $\bx_0 \in S$, we have $g_i(\bx_0) \geq 0$ for all $i$ and therefore, the condition $p(\bx_0) \geq 0$ is automatically satisfied. Considering this decomposition, we obtain the SOCP problem
\begin{align*}
\text{maximize}_{\gamma} & \ \gamma  \ \text{subject to} \ s_{0}\left(\mathbf{x}\right)\!+\!\sum_{i=1}^{m}s_{i}\left(\mathbf{x}\right)g_{i}\left(\mathbf{x}\right)\!-\!\gamma \in \Sigma[\bx].
\end{align*}
Following the discussion in $\S$\ref{subsection: SOS}, the above problem is an SOCP and can be converted to an SDP.


%

\section{Numerical Simulations}

In this section, we validate the proposed approach on two examples: the gradient method and Nesterov's accelerated method. There are several software packages that convert polynomial optimization problems into a hierarchy of SDP relaxations and then call an external solver to solve them. In this paper, we use the software Gloptipoly3 \cite{henrion2009gloptipoly}, which is oriented toward global polynomial optimization problems of the form \eqref{eq: constrained polynomial opt}.

\subsection{The Gradient Method}
Consider the gradient method of Example \ref{example: gradient method}. By choosing $P=p I_d$ ($p \geq 0$), we can apply the dimensionality reduction outlined in Remark \ref{remark: structure} and reduce the dimension of the LMI. After dimensionality reduction, the matrices $M^0, M^1, M^2$, and $M^3$ in the LMI \eqref{eq: LMI} read as
\begin{subequations}
	\begin{align*} 
	M^0 &= \begin{bmatrix}
	1\!-\! \rho^2& -h\\  -h & h^2
	\end{bmatrix} \times p     \\ \nonumber
	M^{1}&\!=\begin{bmatrix}
	0 & 0 \\ 0 &  \frac{1}{2}(L_f h^2-2h)
	\end{bmatrix} \\ \nonumber 
	M^{2}&= \!\begin{bmatrix}
	-\frac{m_f}{2} \!&\! \frac{1}{2} \\  \frac{1}{2} \!&\!  \frac{1}{2}(L_f h^2-2h)
	\end{bmatrix} \\ \nonumber
	M^{3}&= \begin{bmatrix}
	-2m_fL_f & m_f+L_f \\ m_f+L_f  & -2
	\end{bmatrix}.
	\end{align*}
	By substituting these matrices back in  \eqref{eq: LMI}, we obtain the following matrix inequality constraint:
	\begin{align*}
	M = M^0+\!\rho^2 M^1 \!+\! (1\!-\!\rho^2) M^2 \!+\! \lambda M^3 \preceq 0,
	\end{align*}
	where the entries $M_{ij},\ 1 \leq i,j \leq 2$ of $M$ are all polynomials functions of $\rho^2,h,\lambda, p$ and are given by
	\begin{align*}
	M_{11} &= (p-\frac{m_f}{2})(1-\rho^2)-2m_fL_f\lambda \\
	M_{12} &= M_{21} = -hp-\frac{\rho^2}{2}+\frac{1}{2}+\lambda (m_f+L_f) \\
	M_{22} &= h^2+\dfrac{1}{2}(L_fh^2-2h)-2\lambda.
	\end{align*}
\end{subequations}
Recalling that the condition $M \preceq 0$ is equivalent to $\mathrm{Tr}(M) \leq 0$ and $\mathrm{det}(M) \geq 0$, the design problem in \eqref{eq: algorithm design problem} for the gradient method is equivalent to the following polynomial optimization problem:
\begin{alignat}{2} \label{eq: gradient method poly}
&\mathrm{minimize} \ &&\rho^2 \\ \nonumber
&\mbox{subject to} \ &&-M_{11}-M_{22} \geq 0 \\ \nonumber
& &&M_{11}M_{22}-(M_{12})^2 \geq 0 \\ \nonumber
& && p\geq 0, \ h \geq 0, \ \lambda \geq 0, \nonumber
\end{alignat}
with decision variables $\rho^2,h,\lambda$, and $p$. In Figure \ref{fig: rho_gradient}, we plot the optimal rate bound $\rho_{sos}$, obtained by solving an sos relaxation of \eqref{eq: gradient method poly}, for various values of the condition number $\kappa_f$. We also plot the analytical rate outlined in Table \ref{table}. We observe that the rate obtained from the sos formulation coincides with the analytical rate, which is known to be tight.

\begin{figure}[h]
	\centering
	\includegraphics[width=\linewidth]{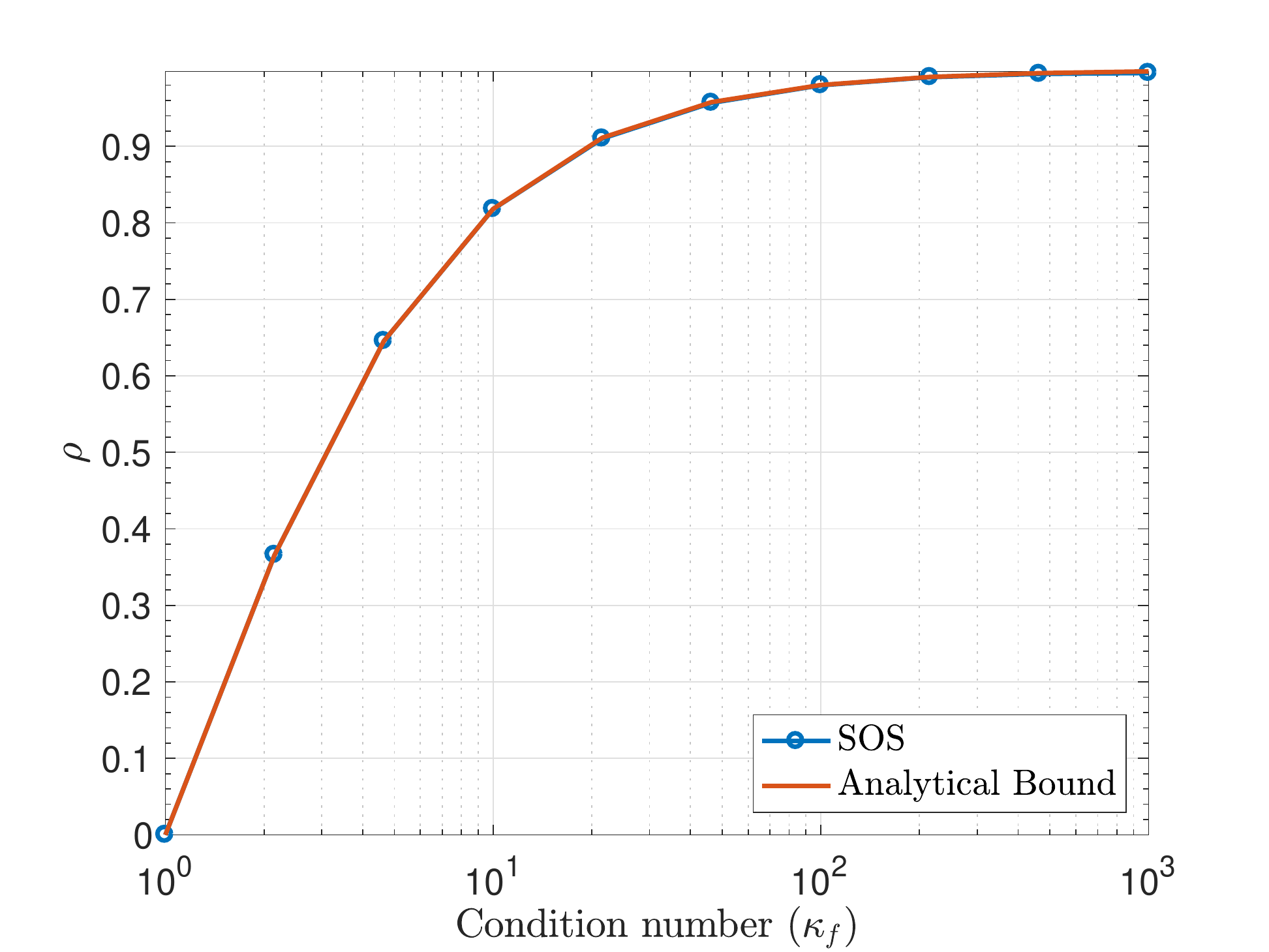}
	\caption{\small Convergence rate of the gradient method obtained by the sos technique and the analytical rate $\rho=\frac{\kappa_f-1}{\kappa_f+1}$.}
	\label{fig: rho_gradient}
\end{figure}

\subsection{Nesterov's Accelerated Method}

Consider the algorithm of Example \ref{example: nesterov heavy-ball method} with $\gamma=\beta$, which corresponds to Nesterov's accelerated method. The state-space matrices of this algorithm are given in \eqref{eq: nesterov heavy-ball matrices}. By applying the dimensionality reduction of Remark \ref{remark: structure}, we arrive at the following matrices that appear in the LMI \eqref{eq: LMI matrices}:
\begin{align} \label{eq: Nesterov method LMI matrices}
M^{0}&\!=\!  \begin{bmatrix}
A^\top P A \!-\! \rho^2 P& A^\top P B\\  B^\top P A& B^\top P B
\end{bmatrix} \\ \nonumber
M^{1}&\!=\! \begin{bmatrix}
-\frac{1}{2}m_f \beta^2 & \frac{1}{2}m_f\beta^2 & -\frac{1}{2}\beta \\  \frac{1}{2}m_f\beta^2 &  -\frac{1}{2}m_f \beta^2 & \frac{1}{2}\beta\\ -\frac{1}{2}\beta & \frac{1}{2}\beta & \frac{1}{2}L_f h^2-h
\end{bmatrix} \\ \nonumber
M^{2}&\!=\! \begin{bmatrix}
-\frac{1}{2}m_f\beta^2 & \frac{1}{2}m_f\beta(\beta+1) & -\frac{1}{2}\beta \\  \frac{1}{2}m_f\beta(\beta+1) &  -\frac{1}{2}m_f(\beta+1)^2 & \frac{1}{2}(\beta+1)  \\ -\frac{1}{2}\beta & \frac{1}{2}(\beta+1) & \frac{1}{2}L_f h^2-h
\end{bmatrix} \\ \nonumber
M^{3} &\!=\! \begin{bmatrix}
-\beta & 0 \\ (1+\beta) & 0 \\ 0 & 1
\end{bmatrix} Q_f \begin{bmatrix}
-\beta   & (1+\beta) & 0 \\ 0 & 0 & 1
\end{bmatrix},
\end{align}
where $Q_f$ is given in \eqref{eq: quadratic matrix} and $P$ is now a $2\times 2$ positive semidefinite matrix. By defining $\bx=[\rho^2 \ h \ \beta \ \lambda \  \mbox{vec}(P)^\top \  ]^\top \in \mathbb{R}^{7}$, $p(\bx)=\rho^2$, and $M(\bx)=M^0+\rho^2 M^1+(1-\rho^2)M^2+\lambda M^3 \in \mathbb{S}^{3}$, the corresponding optimization problem can be written as
\begin{align} \label{eq: Nesterov SOS}
\mbox{minimize} \ p(\bx) \quad \mbox{subject to } -M(\bx) \succeq 0.
\end{align}
Using Lemma \ref{lemma: SOSM} we can convert the polynomial matrix inequality $-M(\bx) \succeq 0$ to a single polynomial inequality in a higher-dimensional space (see Lemma \ref{lemma: SOSM}). Here we scalarize the positive semidefiniteness constraint by considering the principal minors (Sylvester's criterion for positive semidefinite matrices). Furthermore, we fix the stepsize $h$ to the value $1/L_f$ for solving the sos relaxation. In other words, we optimize $\rho^2$ over $(\beta,\lambda,P)$.

In Figure \ref{fig: rho_Nesterov}, we plot the optimal decay rate $\rho_{sos}$, obtained by solving an sos relaxation of \eqref{eq: Nesterov SOS}, for various values of $\kappa_f$. We observe that the obtained rate is slightly better than Nesterov's rate and worse than the theoretical lower bound, which is achieved by the Heavy-ball method on quadratic objective functions--see Table \ref{table}.


\begin{figure}
	\centering
	\includegraphics[width=\linewidth]{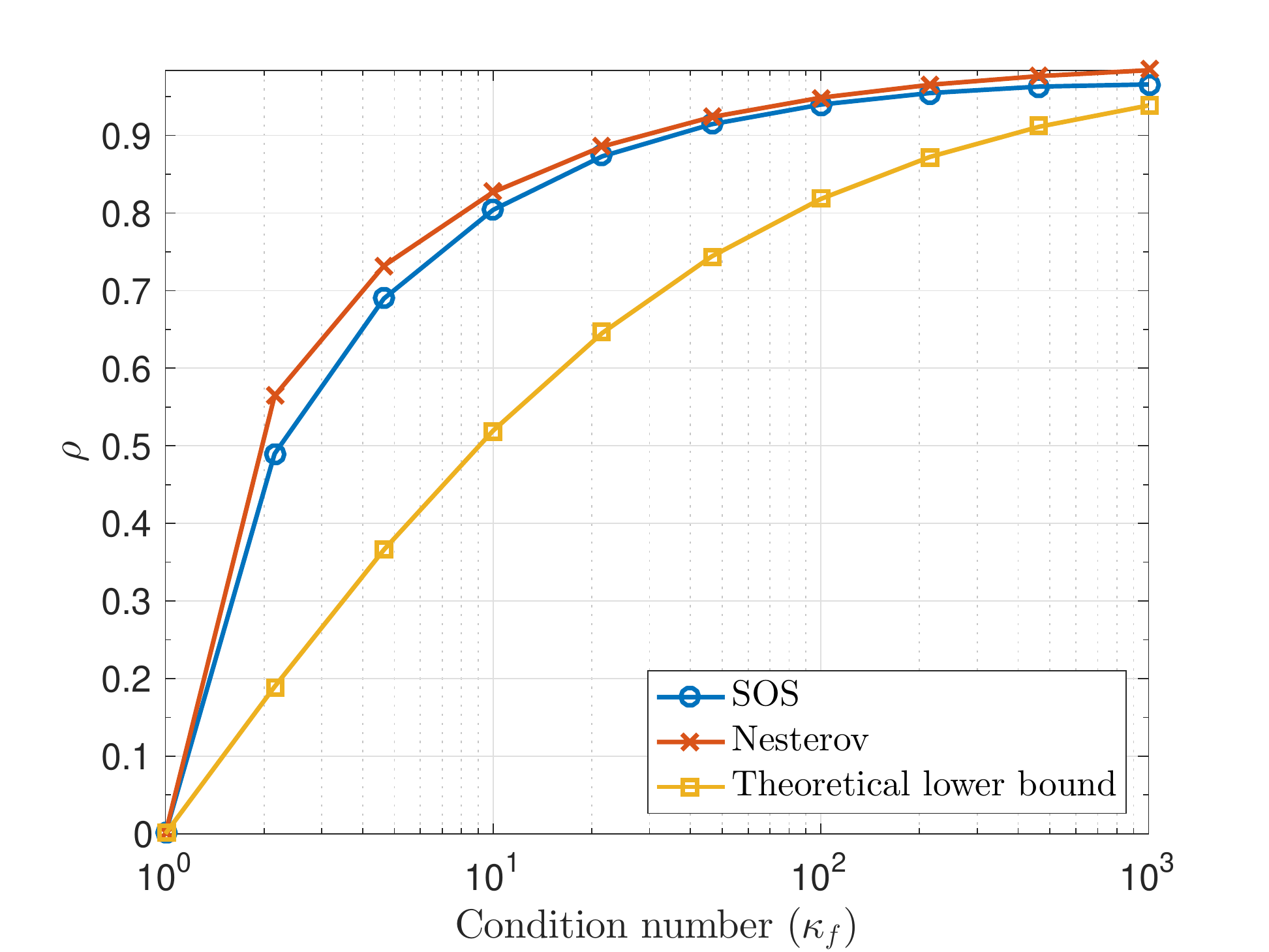}
	\caption{\small Comparison of rate bounds of Nesterov's accelerated method for various values of the condition number and different parameter tunings. The blue curve is obtained by tuning the parameters using the developed sos framework. The red curve is for the parameter tuning proposed by Nesterov \cite{nesterov1983method} (see also Table \ref{table}). Finally, the yellow curve is the theoretical lower bound for all first-order methods, which is achieved by the Heavy-ball method on quadratic objective functions--see Table \ref{table}.}
	\label{fig: rho_Nesterov}
\end{figure}

\section{Conclusions}
We have considered the problem of designing first-order iterative optimization algorithms for solving smooth and strongly convex optimization problems. By using a family of parameterized nonquadratic Lyapunov functions, we presented a polynomial matrix inequality as a sufficient condition for exponential stability of the algorithm. All the entries of this matrix have a polynomial dependence on the unknown parameters, which are the parameters of the algorithm, the parameters of the Lyapunov function, and the exponential decay rate. We then formulated a polynomial optimization problem to search for the optimal convergence rate subject to the polynomial matrix inequality. Finally, we proposed a sum-of-squares relaxation to solve the resulting design problem. We illustrated the proposed approach via numerical simulations. 

\section*{Acknowledgments}

We would like to thank the anonymous reviewers for their useful comments on the earlier draft of this paper. We would also like to thank Amir Ali Ahmadi from Princeton University for his helpful discussion and suggestions.

\bibliographystyle{ieeetr}
\bibliography{Refs}

\end{document}